
\documentclass[11pt]{amsart}

\usepackage[a4paper,hmargin=3.5cm,vmargin=4cm]{geometry}
\usepackage{amsfonts,amssymb,amscd,amstext}
\usepackage{graphicx}
\usepackage[dvips]{epsfig}


\usepackage{fancyhdr}
\pagestyle{fancy}
\fancyhf{}

\input xy
\xyoption{all}


\usepackage{times}

\usepackage{enumerate}
\usepackage{titlesec}
\usepackage{mathrsfs}

\pretolerance=2000
\tolerance=3000


\headheight=13.03pt
\headsep 0.5cm
\topmargin 0.5cm
\textheight = 49\baselineskip
\textwidth 14cm
\oddsidemargin 1cm
\evensidemargin 1cm

\setlength{\parskip}{0.5em}

\titleformat{\section}
{\filcenter\bfseries\large} {\thesection{.}}{0.2cm}{}
\titleformat{\subsection}[runin]
{\bfseries} {\thesubsection{.}}{0.15cm}{}[.]
\titleformat{\subsubsection}[runin]
{\em}{\thesubsubsection{.}}{0.15cm}{}[.]

\usepackage[up,bf]{caption}


\newtheorem{theorem}{Theorem}[section]
\newtheorem{proposition}[theorem]{Proposition}

\newtheorem{lemma}[theorem]{Lemma}

\theoremstyle{definition}

\numberwithin{equation}{section}
\numberwithin{figure}{section}




\newcommand\Oscr{\mathscr{O}}


\newcommand\B{\mathbb{B}}
\newcommand\C{\mathbb{C}}
\newcommand\D{\overline{\mathbb D}}
\newcommand\CP{\mathbb{CP}}
\renewcommand\D{\mathbb D}

\newcommand\N{\mathbb{N}}

\newcommand\R{\mathbb{R}}

\newcommand\cd{\overline{\mathbb D}}

\renewcommand\d{\mathbb D}


\newcommand\igot{\mathfrak{i}}

\renewcommand\igot{\mathfrak{i}}

%
%

%
%

\renewcommand\imath{\igot}

%
%
\newcommand\hra{\hookrightarrow}

%
%

\newcommand\di{\partial}

%
%

\newcommand\Aut{\mathrm{Aut}}



\usepackage{color}

\begin{document}

\fancyhead[LO]{Hyperbolic complex contact structures on $\C^{2n+1}$}
\fancyhead[RE]{F.\ Forstneri\v c} 
\fancyhead[RO,LE]{\thepage}

\thispagestyle{empty}

\vspace*{1cm}
\begin{center}
{\bf\LARGE Hyperbolic complex contact structures on $\C^{2n+1}$}

\vspace*{0.5cm}

{\large\bf  Franc Forstneri\v c} 
\end{center}


\vspace*{1cm}

\begin{quote}
{\small
\noindent {\bf Abstract}\hspace*{0.1cm}
In this paper we construct complex contact structures on 
$\mathbb{C}^{2n+1}$ for any $n\ge 1$ with the property that every holomorphic Legendrian map $\mathbb{C}\to \mathbb{C}^{2n+1}$ is constant. In particular, these contact structures are not globally contactomorphic to the standard complex 
contact structure on $\mathbb{C}^{2n+1}$.

\vspace*{0.2cm}

\noindent{\bf Keywords}\hspace*{0.1cm} complex contact structures, hyperbolicity, Fatou-Bieberbach domains.

\vspace*{0.1cm}


\noindent{\bf MSC (2010):}\hspace*{0.1cm} 53D10; 32M17, 32Q45, 37J55}
\end{quote}


\section{Introduction and main results} 
\label{sec:intro}

Let $M$ be a complex manifold of odd dimension $2n+1\ge 3$, where $n\in\N=\{1,2,\ldots\}$. 
A holomorphic vector subbundle $\xi \subset TM$ of complex codimension one in the tangent bundle $TM$ 
is a {\em holomorphic contact structure} on $M$ if every point $p\in M$ admits an open neighborhood 
$U\subset M$ such that $\xi|_U=\ker\alpha$ for a holomorphic $1$-form $\alpha$ on $U$ satisfying 
\[
	\alpha \wedge(d\alpha)^n \neq 0. 
\]
A 1-form $\alpha$ satisfying this nondegeneracy condition is called a {\em holomorphic contact form},
and $(M,\xi)$ is a {\em complex contact manifold}. We shall also write $(M,\alpha)$ when $\xi=\ker \alpha$ holds on all of $M$.  
The model is the complex Euclidean space $(\C^{2n+1}_{x_1,y_1,\ldots,x_n,y_n,z},\xi_0=\ker\alpha_0)$ where
 $\alpha_0$ is the the standard complex contact form
\begin{equation}\label{eq:alpha0}
	\alpha_0 = dz + \sum_{j=1}^n x_j \, dy_j.
\end{equation}
By Darboux's theorem, every holomorphic contact form equals $\alpha_0$  in suitably chosen local 
holomorphic coordinates at any given point (see e.g.\ Geiges \cite[Theorem 2.5.1, p.\ 67]{Geiges2008}
for the smooth case and \cite[Theorem A.2]{AlarconForstnericLopez2016Legendrian} for the holomorphic one). 
This standard case has recently been considered by Alarc\'on, L\'opez and the author in \cite{AlarconForstnericLopez2016Legendrian}. 
They proved in particular that every open Riemann surface $R$ admits a proper holomorphic embedding 
$f\colon R\hra (\C^{2n+1},\alpha_0)$ as a {\em Legendrian curve}, meaning that $f^*\alpha_0=0$ holds on $R$.
In the same paper, the authors asked whether there exists a holomorphic contact form $\alpha$ on $\C^{3}$
which is not globally equivalent to the standard form $\alpha_0$ 
(cf.\ \cite[Problem 1.5, p.\ 4]{AlarconForstnericLopez2016Legendrian}). In this paper we provide
such examples in every dimension.

\begin{theorem}\label{th:main}
For every $n\in \N$ there exists a holomorphic contact form $\alpha$ on $\C^{2n+1}$
such that any holomorphic map $f\colon \C\to\C^{2n+1}$ satisfying $f^*\alpha=0$ is constant.
In particular, the complex contact manifold $(\C^{2n+1},\alpha)$ is not contactomorphic to $(\C^{2n+1},\alpha_0)$.
\end{theorem}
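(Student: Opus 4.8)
The plan is to construct $\alpha$ so that its Legendrian curves $f\colon\C\to\C^{2n+1}$ are forced to land in a region carrying a hyperbolic (Kobayashi-negative) structure, so that Liouville's theorem/Brody-type reasoning kills any nonconstant entire Legendrian map. The natural mechanism is to pull the standard contact structure back by a biholomorphism of $\C^{2n+1}$ whose image is a Fatou--Bieberbach domain, and to arrange the geometry so that Legendrian curves are trapped inside a bounded piece of the coordinates.

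\textbf{Construction.} First I would look for $\alpha$ of the form $\alpha=\Phi^*\alpha_0$ for a suitable biholomorphism $\Phi\colon\C^{2n+1}\to\Omega$ onto a Fatou--Bieberbach domain $\Omega\subsetneq\C^{2n+1}$, or more flexibly, take $\alpha_0$ and replace the coordinate functions $x_j,y_j,z$ by entire functions whose level sets are highly curved. Since $\alpha_0=dz+\sum_j x_j\,dy_j$, the condition $f^*\alpha=0$ for $f=(X_1,Y_1,\ldots,X_n,Y_n,W)\colon\C\to\C^{2n+1}$ reads $W'+\sum_j X_j Y_j'=0$, an integral constraint linking the components. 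The key idea is that Darboux guarantees local equivalence to $\alpha_0$ but says nothing globally: by choosing $\alpha$ so that the "$y_j$-directions" become relatively compact images, a Legendrian curve's $y_j$-components $Y_j\colon\C\to\C$ are confined to a bounded (indeed hyperbolic) subset of $\C$, whence each $Y_j$ is constant by Liouville, and then the defining relation forces $W$ constant as well, after which the remaining freedom in $X_j$ is also eliminated by the contact (nondegeneracy) structure.

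\textbf{Steps in order.} First I would fix a Fatou--Bieberbach domain $\Omega\subset\C^n$ avoiding a large set (e.g.\ with $\C^n\setminus\Omega$ containing enough of a complex hyperplane arrangement to make $\Omega$ Kobayashi hyperbolic, or at least Brody hyperbolic) and let $\psi\colon\C^n\xrightarrow{\cong}\Omega$ be the biholomorphism. Second, I would build $\alpha$ on $\C^{2n+1}$ by applying $\psi$ only in the $(y_1,\ldots,y_n)$ variables, i.e.\ set $\wt y=\psi(y)$ and define $\alpha=dz+\sum_j x_j\, d\wt y_j$; verifying that this remains a genuine contact form (the nondegeneracy $\alpha\wedge(d\alpha)^n\neq0$) is a direct computation using that $\psi$ is a local biholomorphism. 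Third, given an entire Legendrian $f$, I would show that the map $y\mapsto\wt y$ forces the $y$-component of $f$ to take values in $\psi^{-1}(\Omega)=\C^n$ but with the induced metric pulled back from the hyperbolic $\Omega$, so the composite $\C\to\Omega$ is distance-decreasing into a hyperbolic target and hence constant; this is where hyperbolicity does the work. Fourth, I would feed the resulting constancy back into the Legendrian relation $W'=-\sum_j X_j\wt Y_j'=0$ to conclude $W$ is constant, and then use a transversality/nondegeneracy argument to force the $x_j$ constant, giving $f$ constant. The final sentence of the theorem is then immediate: a global contactomorphism to $(\C^{2n+1},\alpha_0)$ would carry the abundant nonconstant entire Legendrian lines of $\alpha_0$ (e.g.\ $t\mapsto(t,0,\ldots)$ along $x_1$) to nonconstant entire Legendrian maps for $\alpha$, contradicting what we proved.

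\textbf{Main obstacle.} The delicate point is reconciling \emph{two competing demands}: the form $\alpha$ must be globally a contact form on all of $\C^{2n+1}$ (nondegenerate everywhere, so nothing degenerates at infinity), yet it must embed enough hyperbolicity to trap Legendrian curves. Simply composing with a Fatou--Bieberbach map in some variables risks either destroying nondegeneracy or leaving "escape directions" (e.g.\ the $x_j$ and $z$ directions, along which $\alpha_0$ admits entire Legendrian lines) through which a nonconstant $f$ could still run. The real work is to show that \emph{every} Legendrian direction is controlled — not merely the $y$-directions — so I expect the crux to be an argument showing that once the $\wt Y_j$ are constant, the Legendrian constraint rigidly pins down the remaining components; this likely requires choosing $\Omega$ (and the way it is inserted into $\alpha$) so that the hyperbolicity propagates through the symplectic pairing $x_j\,d\wt y_j$ rather than sitting in an inert factor. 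I would expect the construction to use the $x_j$ and $y_j$ \emph{symmetrically} or to hyperbolize a diagonal combination, precisely so that no coordinate axis survives as an entire Legendrian line.
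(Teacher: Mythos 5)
Your high-level skeleton matches the paper's: the form is indeed taken to be $\alpha=\Phi^*\alpha_0$ for a Fatou--Bieberbach map $\Phi$, and your closing deduction (a contactomorphism would transport the nonconstant entire Legendrian lines of $\alpha_0$) is exactly how the "in particular" clause is obtained. But the core mechanism you propose has a fatal flaw, and the obstacle you correctly identify at the end is precisely the one your construction does not overcome.

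First, a Fatou--Bieberbach domain $\Omega\subset\C^n$ is by definition biholomorphic to $\C^n$; it therefore contains nonconstant entire curves (images of affine lines under the biholomorphism) and can \emph{never} be Kobayashi or Brody hyperbolic, no matter how large a set its complement contains. So the step "the composite $\C\to\Omega$ is distance-decreasing into a hyperbolic target and hence constant" rests on an object that cannot exist, and no constancy of the $\wt Y_j$ follows. Second, even granting some hyperbolization of the $y$-variables, your form $\alpha=dz+\sum_j x_j\,d\wt y_j$ admits the nonconstant entire Legendrian lines $t\mapsto(x_0+te_1,y_0,z_0)$ (both $dz$ and every $d\wt y_j$ vanish along them), so the theorem fails for it outright; you flag this "escape direction" problem but offer no mechanism to close it. The paper's resolution is genuinely different: it does not make any complex submanifold or factor hyperbolic. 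Instead it proves \emph{directed} (CCK) hyperbolicity of $\Omega_0=\C^{2n+1}\setminus K$, where $K$ is a union of \emph{real} shells $2^{N-1}b\D^{2n}_{(x,y)}\times C_N\overline\D_z$ tending to infinity. The Legendrian relation $z'=-\sum_j x_j y_j'$ couples the growth of $z$ to that of $(x,y)$, and Cauchy estimates show that a Legendrian disk centered in $2^{N_0}\D^{2n+1}$ cannot push its $(x,y)$-component across the shell at radius $2^{N-1}$ without its $z$-component exceeding $C_N$ in modulus, i.e., without meeting $K$; this traps \emph{all} components at once and bounds $|f'(0)|$. The Fatou--Bieberbach map is then used for a different purpose than in your proposal: not to create hyperbolicity, but to realize all of $\C^{2n+1}$ as a subdomain of $\C^{2n+1}\setminus K$ (via a push-out construction with shears), so that the pulled-back structure inherits the directed hyperbolicity. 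Note that $\C^{2n+1}\setminus K$ is not hyperbolic as a plain complex manifold --- only for Legendrian disks --- which is exactly why the argument must run through the directed Kobayashi metric rather than through Liouville applied to a single coordinate block.
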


Indeed, a contactomorphism sends Legendrian curves to Legendrian curves, and 
$(\C^{2n+1},\xi_0)$ admits plenty of embedded Legendrian complex lines $\C\hra\C^{2n+1}$.
Indeed, given a point $p=(x_0,y_0,z_0)\in\C^3$ and a  vector
$\nu=(\nu_1,\nu_2,\nu_3) \in \ker\alpha_0|_p$, the quadratic map $f\colon \C\to \C^3$ given by
\[
	f(\zeta)=\left(x_0+\nu_1\zeta,  y_0+\nu_2\zeta, z_0+\nu_3\zeta - \nu_1\nu_2 \zeta^2/2 \right)
\]
is a holomorphic Legendrian embedding satisfying $f(0)=p$ and $f'(0)=\nu$.

We expect that our construction actually gives many nonequivalent holomorphic contact structures on $\C^{2n+1}$; 
however, at this time we do not know how to distinguish them. 
Eliashberg showed that on $\R^3$ there exist countably many isotopy classes of smooth contact structures 
\cite{Eliashberg1989IM,Eliashberg1993IMRN}. His classification is based on the study of 
{\em overtwisted disks} in contact 3-manifolds; it is not clear whether a similar invariant 
could be used in the complex case.

In order to prove Theorem \ref{th:main}, we consider the {\em directed Kobayashi metric} associated to a 
contact complex manifold $(M,\xi)$. Let $\D=\{\zeta\in \C: |\zeta|<1\}$ denote the open unit disk.
Given a  holomorphic subbundle $\xi\subset TM$, we say that a holomorphic disk $f\colon \D\to M$ is 
{\em tangential to $\xi$} or {\em horizontal}  if 
\[
	f'(\zeta) \in \xi|_{f(\zeta)} \quad \text{holds for all}\ \ \zeta\in\D.
\]
Consider the function $\xi\to\R_+$ given for any point  $p\in M$ and vector $v\in \xi_p$ by
\[
 	|v|_{\xi} = \inf \left\{\frac{1}{|\lambda|}: \ \exists f\colon \D\to M\ \text{horizontal},\ f(0)=p,\ f'(0)=\lambda v\right\}.
\]
When $\xi=TM$, this is the Kobayashi length of the tangent vector $v\in T_pM$, and its integrated version 
is the Kobayashi metric on $M$  (cf.\ Kobayashi \cite{Kobayashi1998Book2,Kobayashi2005Book1}). 
The directed version of the Kobayashi metric was studied by Demailly \cite{Demailly1995PSPM} and several other authors,
mainly on complex projective manifolds. More general metrics, obtained by integrating a Riemannian metric 
along horizontal curves in a smooth directed manifold $(M,\xi)$, have been studied by Gromov \cite{Gromov1996PM} 
under the name {\em Carnot-{C}arath\'eodory metrics}. (See also Bella{\"{\i}}che \cite{Bellaiche1996PM}.) 
For this reason, we propose the name {\em Carnot-{C}arath\'eodory-Kobayashi metric}, or {\em CCK metric},  for the 
pseudodistance function $d_\xi\colon M\times M\to \R_+$ defined by 
\begin{equation}\label{eq:dxi}
	d_\xi(p,q)=\inf_\gamma \int_0^1 |\gamma'(t)|_{\xi} \, dt,\quad p,q\in M,
\end{equation}
where the infimum is over all piecewise smooth paths $\gamma\colon [0,1]\to M$ 
satisfying $\gamma(0)=p$, $\gamma(1)=q$ and $\gamma'(t)\in\xi_{\gamma(t)}$ for all $t\in[0,1]$.
(By Chow's theorem \cite{Chow1939MA}, a horizontal path connecting any given pair of points in $M$
exists when the repeated commutators of vector fields tangential to $\xi$ span the tangent space of $M$ at every 
point. A discussion and proof of Chow's theorem can also be found in Gromov's paper 
\cite[p.\ 86 and p.\ 113]{Gromov1996PM}. Another source is Sussman \cite{Sussmann1973TAMS,Sussmann1973BAMS}.)

The directed complex manifold $(M,\xi)$ is said to be  {\em (Kobayashi) hyperbolic} if $d_\xi$ given by \eqref{eq:dxi} 
is a distance function on $M$ (i.e., if $d_\xi(p,q)>0$ holds for all pairs of distinct points $p,q\in M$), and is 
{\em complete hyperbolic} if $d_\xi$ is a complete metric on $M$. Clearly, the directed Kobayashi metric 
on $(M,\xi)$ dominates the standard Kobayashi metric on $M$. 

Now, Theorem \ref{th:main} is an obvious  corollary to the following result.

%
%
\begin{theorem}\label{th:main2}
For every $n\in \N$ there exists a holomorphic contact form $\alpha$ on $\C^{2n+1}$ such that the 
complex contact manifold $(\C^{2n+1},\xi=\ker\alpha)$ is Kobayashi hyperbolic.
\end{theorem}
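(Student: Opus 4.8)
The plan is to reduce the statement to a single geometric construction by exploiting the invariance of the directed metric. If $\Phi\colon\C^{2n+1}\to\Omega\subset\C^{2n+1}$ is a biholomorphism onto a domain $\Omega$ and we set $\alpha=\Phi^*\alpha_0$, then $\alpha$ is again a holomorphic contact form (the nondegeneracy $\alpha\wedge(d\alpha)^n\neq0$ is preserved by biholomorphisms), and $\Phi$ carries horizontal disks for $\xi=\ker\alpha$ bijectively onto horizontal disks for $\xi_0=\ker\alpha_0$ contained in $\Omega$. Hence $d_\xi=\Phi^*\bigl(d_{\xi_0}\big|_\Omega\bigr)$, so $(\C^{2n+1},\ker\alpha)$ is Kobayashi hyperbolic \emph{iff} the directed manifold $(\Omega,\xi_0|_\Omega)$ is. It therefore suffices to produce a domain $\Omega\subset(\C^{2n+1},\alpha_0)$ that is biholomorphic to $\C^{2n+1}$ and on which the CCK metric of the standard contact structure is a genuine distance. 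Since $\C^{2n+1}$ itself is not directed hyperbolic (it carries the Legendrian lines exhibited after Theorem \ref{th:main}), $\Omega$ must be a \emph{proper} subdomain, i.e.\ a Fatou--Bieberbach domain; this is where that tool enters.

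First I would construct $\Omega$ so that the standard contact structure is manifestly available on it and so that horizontal disks are trapped. A convenient way to guarantee simultaneously that $\Omega$ is biholomorphic to $\C^{2n+1}$ and that $\xi_0|_\Omega$ is well behaved is to take $\Omega$ to be the basin of attraction of a contact automorphism $F$ of $(\C^{2n+1},\alpha_0)$ with an attracting fixed point. The explicit contact dilation $\delta_t(x,y,z)=(tx,ty,t^2z)$, which satisfies $\delta_t^*\alpha_0=t^2\alpha_0$, contracts toward the origin for $0<|t|<1$; composing it with a suitable nonlinear contactomorphism fixing the origin produces an $F$ whose basin $\Omega$ is a proper (hence Fatou--Bieberbach) $F$-invariant domain on which $\xi_0$ is preserved. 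The shape of $\Omega$ --- equivalently, the closed set $\C^{2n+1}\setminus\Omega$ that it avoids --- is the design parameter to be tuned so as to forbid large horizontal disks.

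The heart of the matter, and the step I expect to be the main obstacle, is proving that $(\Omega,\xi_0)$ is directed hyperbolic. It is essential to notice that no \emph{soft} argument can succeed: because $\Omega\cong\C^{2n+1}$, every holomorphic map from $\Omega$ to a Kobayashi hyperbolic manifold is constant, and no component of a horizontal entire curve in $\Omega$ can be forced to omit two values (a coordinate omitting two values would map $\Omega$ to the hyperbolic surface $\C\setminus\{0,1\}$, hence be constant). In particular one cannot bound $d_{\xi_0}$ from below by pulling back a genuine Kobayashi metric, nor invoke a Picard-type vanishing coordinatewise. Instead one must establish a \emph{direct quantitative bound} on the derivative of every horizontal disk $f=(x,y,z)\colon\D\to\Omega$, and here the contact relation $f^*\alpha_0=0$, i.e.\ $z'=-\sum_{j}x_j\,y_j'$, is indispensable: it couples the ``fiber'' derivative $z'$ to the ``base'' derivatives through the values of $x$, so that confining the orbit to $\Omega$ also constrains the base motion. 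The task is to choose $\C^{2n+1}\setminus\Omega$ so that this coupling yields a uniform estimate $|f'(0)|\le C$ for all horizontal $f\colon\D\to\Omega$ with $f(0)$ in a prescribed compact set.

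Once such a bound is in hand, directed hyperbolicity follows in the usual way: a point where $d_{\xi_0}$ degenerates would produce, after a Brody-type rescaling of horizontal disks, a nonconstant horizontal entire curve $\C\to\Omega$ of bounded derivative, contradicting the estimate; the only additional care needed is to prevent the rescaled limit from escaping to $\partial\Omega$, which is ensured by building $\Omega$ (as an attracting basin, or as a sublevel set of a contact-adapted plurisubharmonic exhaustion) so that the directed metric is complete. Transplanting via $\Phi$ then yields Theorem \ref{th:main2}, and Theorem \ref{th:main} is immediate, since a hyperbolic directed manifold admits no nonconstant entire horizontal curve.
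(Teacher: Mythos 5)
Your reduction is exactly the paper's: set $\alpha=\Phi^*\alpha_0$ for a Fatou--Bieberbach map $\Phi\colon\C^{2n+1}\to\Omega$, use that $\Phi$ is an isometry for the directed metrics, and reduce to showing that $(\Omega,\xi_0|_\Omega)$ is hyperbolic. You also correctly identify that the contact relation $z'=-\sum_j x_j y_j'$ is the only available leverage and that no soft (Picard-type) argument can work on a domain biholomorphic to $\C^{2n+1}$. However, the two steps that constitute the actual proof are left unexecuted. First, you never specify the obstacle set $\C^{2n+1}\setminus\Omega$ nor prove the derivative bound; you only state it as ``the task.'' The paper's choice is a union of cylinders $K=\bigcup_N 2^{N-1}b\D^{2n}_{(x,y)}\times C_N\overline\D_z$ with $C_N\ge n2^{3N+1}$, and the estimate (its Lemma \ref{lem:estimate}) is a genuine argument: Cauchy estimates bound $x y'$, hence $|z|<C_N$ along the disk, so avoiding $K$ forces $(x,y)$ to stay inside $2^{N-1}\D^{2n}$, and an iteration of this trapping argument shrinks the confinement down to $2^{N_0}\D^{2n}$ on $|\zeta|\le 1/2$, whence $|f'(0)|$ is bounded. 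Without exhibiting a concrete $K$ and running such an argument, the central claim is unproved. Second, your proposed construction of $\Omega$ as the attracting basin of a contact automorphism comes with no mechanism for controlling which sets the basin avoids; there is no reason the basin of $\delta_t$ composed with ``a suitable nonlinear contactomorphism'' misses a prescribed union of cylinders going to infinity. (Requiring $\Phi$ or $F$ to preserve $\xi_0$ is also an unnecessary constraint: $\xi_0$ restricts to any subdomain, and only the shape of $\Omega$ matters.) The paper instead uses the push-out method with shear automorphisms, following Globevnik, precisely because it allows one to push a prescribed noncompact union of cylinders to infinity while converging to the identity on an exhausting sequence of polydisks.

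A further concrete flaw is your concluding step. You deduce hyperbolicity via Brody rescaling and admit this needs the rescaled limit not to escape to $\partial\Omega$, which you propose to ensure by making $d_{\xi_0}$ \emph{complete} on $\Omega$; but completeness is not established by any of your constructions, and indeed the paper lists the existence of a complete hyperbolic complex contact structure on $\C^{2n+1}$ as an open problem. Fortunately this detour is unnecessary: the quantitative bound $|f'(0)|\le C_{N_0}$ for horizontal disks centered in $2^{N_0}\D^{2n+1}$ directly gives $|v|_{\xi_0}\ge |v|/C_{N_0}$ on compact sets, so every horizontal path joining $p\ne q$ has $\int|\gamma'|_{\xi_0}\,dt$ bounded below by a positive constant (either the path stays in a fixed polydisk, where the infinitesimal metric dominates a multiple of the Euclidean one, or it must first cross a Euclidean sphere around $p$), and $d_{\xi_0}(p,q)>0$ follows without any rescaling.
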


The contact $1$-forms that we shall construct in the proof of Theorem \ref{th:main2} are of the form
\[
	\alpha= \Phi^* \alpha_0
\]
where $\alpha_0$ is the standard contact form \eqref{eq:alpha0} and $\Phi\colon \C^{2n+1}\hra \C^{2n+1}$
is a {\em Fatou-Bieberbach map}, i.e., an injective holomorphic map from $\C^{2n+1}$ onto a proper subdomain
$\Omega=\Phi(\C^{2n+1})\subsetneq \C^{2n+1}$ such that $(\Omega,\alpha_0|_\Omega)$ is a hyperbolic 
contact manifold. Let us describe this construction. Let $C_N>0$ for $N\in\N$ be a sequence diverging to $+\infty$ and 
\begin{equation}\label{eq:K}
	K=\bigcup_{N=1}^\infty 2^{N-1} b\D^{2n}_{(x,y)} \times C_N\overline \D_z.
\end{equation}
Here, $b\D^{2n}_{(x,y)}\subset \C^{2n}$ denotes the boundary of the unit polydisk in the $(x,y)$-space 
and $\overline\D_z$ is the closed unit disk in the $z$ direction. Thus, $K$ is the union of a sequence of compact cylinders
$K_N=2^{N-1} b\D^{2n}_{(x,y)} \times C_N\overline \D_z$ tending to infinity in all directions.
Theorem \ref{th:main2} follows immediately from the following two results of possible independent interest.
In both results, $K$ is the set given by \eqref{eq:K}.

\begin{proposition} \label{prop:hyperbolic}
If $C_N\ge n2^{3N+1}$ holds for all $N\in\N$ then the domain $\Omega_0=\C^{2n+1}\setminus K$ is $\alpha_0$-hyperbolic.
(Here, $\alpha_0$ is the contact form \eqref{eq:alpha0}.)
\end{proposition}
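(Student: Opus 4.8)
The plan is to show that on the domain $\Omega_0 = \C^{2n+1}\setminus K$, every horizontal (Legendrian) holomorphic disk $f\colon \D\to\Omega_0$ has controlled derivative, so that $|v|_{\xi}$ is bounded below by a positive continuous quantity and $d_\xi$ separates points.

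I need to understand the geometry. The Legendrian condition $f^*\alpha_0=0$ means $z' = -\sum_j x_j y_j'$, so the $z$-component of a horizontal disk is essentially determined by the integral of the $(x,y)$-data. The set $K$ is a union of cylinders $K_N = 2^{N-1} b\D^{2n}_{(x,y)}\times C_N\overline\D_z$, with the $z$-radii $C_N$ growing fast. The key point is that the walls $2^{N-1}b\D^{2n}$ in the $(x,y)$-directions, combined with the tall $z$-extent $C_N\overline\D_z$, act as barriers: a horizontal disk that wants to change its $(x,y)$-coordinates across one of these polydisk boundaries is forced to have its $z$-coordinate move through a region it must avoid, and the Legendrian relation ties the $z$-motion to the $(x,y)$-motion.

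The main structure of the argument: first I would reduce hyperbolicity to a derivative estimate for horizontal disks. Fix a point $p\in\Omega_0$ and a nonzero tangent vector $v\in\xi_p$. A horizontal disk $f\colon\D\to\Omega_0$ with $f(0)=p$, $f'(0)=\lambda v$ must avoid $K$; since $\Omega_0\subset\C^{2n+1}$ and the standard Kobayashi metric of $\C^{2n+1}$ is trivial, I cannot use Kobayashi hyperbolicity of the ambient space, so the barriers $K$ must do all the work. The strategy is to project: let $\pi\colon\C^{2n+1}\to\C^{2n}$ be the projection to the $(x,y)$-coordinates. The composition $\pi\circ f$ is a holomorphic disk in $\C^{2n}$. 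The cylinders $K_N$ have $(x,y)$-projections equal to the polydisk boundaries $2^{N-1}b\D^{2n}$, which are not themselves obstructions in $\C^{2n}$ (they are real codimension-one, so $\pi\circ f$ can freely cross them). Thus the obstruction is genuinely three-dimensional and uses the $z$-direction: I would estimate how large $z=f_z$ can grow in terms of the $(x,y)$-excursions, via the Legendrian relation $f_z(\zeta)=z_0 - \int \sum_j f_{x_j}\, df_{y_j}$.

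The heart of the matter, and the step I expect to be the main obstacle, is the following barrier estimate. Suppose the $(x,y)$-component of a horizontal disk wants to travel from inside the polydisk $2^{N-1}\D^{2n}$ out past its boundary. Crossing the wall $K_N$ is forbidden, so $f$ must "go around" through the $z$-direction, i.e., at the crossing the $z$-coordinate must have modulus exceeding $C_N$. The Legendrian relation bounds the increment of $z$ by an area-type integral of the $(x,y)$-map. Quantitatively, if $\pi\circ f$ sweeps out a region whose boundary winds around the polydisk wall of radius $2^{N-1}$, then $\left|\int \sum_j f_{x_j}\, df_{y_j}\right|$ is controlled from below by something like $2^{N-1}\cdot 2^{N-1} = 4^{N-1}$, while to escape the cylinder the disk needs $|f_z|>C_N$. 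With the hypothesis $C_N\ge n\,2^{3N+1}$, these $z$-excursions grow faster than the horizontal motion can supply through the Legendrian constraint, creating a compounding sequence of barriers that confines horizontal disks. Making this precise requires a Schwarz-type lemma estimate relating the Legendrian area integral to the image of $f$ and carefully tracking the topology of how $\pi\circ f$ can cross the successive polydisk walls; the bookkeeping of the geometric series in $N$ against the growth of $C_N$ is where the constant $n\,2^{3N+1}$ is forced, and getting the winding/crossing argument airtight is the crux of the proof.
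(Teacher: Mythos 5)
Your overall framing is right: the proof does reduce to a derivative bound for horizontal disks, the Legendrian relation $z'=-\sum_j x_j y_j'$ is the link between the $z$-motion and the $(x,y)$-motion, and the cylinders act as barriers because crossing the wall $2^{N-1}b\D^{2n}$ forces $|z|>C_N$. But the quantitative step you flag as ``the crux'' is left undone, and the mechanism you propose for it points in the wrong direction. You suggest bounding the area integral $\int\sum_j x_j\,dy_j$ \emph{from below} by roughly $4^{N-1}$ when the projection winds around the wall, and comparing this with $C_N\ge n2^{3N+1}$. That comparison does not close the argument: a lower bound of order $4^{N}$ on the integral is compatible with $|z|$ reaching $8^{N}$, so it does not prevent the disk from escaping past the wall. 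What is actually needed is an \emph{upper} bound on $|z|$ that stays strictly below $C_N$, and no winding-number or degree argument is required.

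The paper's proof obtains this as follows. After shrinking, assume $f$ is holomorphic on $\overline\D$ and pick $N>N_0$ with $|x|,|y|<2^N$ on $\overline\D$. The Cauchy estimates with $\delta=2^{-N}$ give $|x(\zeta)y'(\zeta)|<2^{3N}$ for $|\zeta|\le 1-2^{-N}$, hence $|z(\zeta)|<2^{N_0}+2^{3N}<2^{3N+1}\le C_N$ there. Since $f$ avoids $K$, the $(x,y)$-component therefore cannot meet $2^{N-1}b\D^{2n}$ on that subdisk; as this wall disconnects $2^N\D^{2n}$ and the initial point lies in $2^{N-1}\D^{2n}$, connectivity confines $(x,y)$ to $2^{N-1}\D^{2n}$ on the subdisk of radius $1-2^{-N}$. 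Iterating this confinement downward from $N$ to $N_0$ (losing radius $2^{-N}+\cdots+2^{-(N_0+1)}<1/2$ in total) traps $(x,y)$ in $2^{N_0}\D^{2n}$ on the disk of radius $1/2$, and a final Cauchy estimate yields $|x'(0)|,|y'(0)|\le 2^{N_0+1}$ and $|z'(0)|\le 2^{2N_0+1}$. Your sketch is missing the a priori bound $|x|,|y|<2^N$, the Cauchy-estimate upper bound on $|z|$, the elementary disconnection argument replacing your winding analysis, and the downward induction; these together are the substance of the proof rather than bookkeeping.
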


\begin{proposition} \label{prop:avoiding}
For every choice of constants $C_N>0$ there exists a Fatou-Bieberbach domain 
$\Omega\subset \C^{2n+1}\setminus K$.
\end{proposition}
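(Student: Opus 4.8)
The plan is to realize the Fatou--Bieberbach domain as the image $\Omega=\Phi(\C^{2n+1})$ of an injective holomorphic map $\Phi\colon\C^{2n+1}\to\C^{2n+1}$ whose image avoids $K$. Since an injective holomorphic self-map of $\C^{2n+1}$ is a biholomorphism onto its (open, connected) image, and since any such image omitting the nonempty set $K$ is automatically a proper subdomain, it suffices to produce such a $\Phi$ with $\Phi(\C^{2n+1})\cap K=\emptyset$. I would build $\Phi$ by the Andersén--Lempert push-out method, i.e.\ as a locally uniform limit $\Phi=\lim_{k\to\infty}\Phi_k$ of holomorphic automorphisms $\Phi_k=\psi_k\circ\cdots\circ\psi_1\in\Aut(\C^{2n+1})$, controlled along an exhaustion of $\C^{2n+1}$ by closed balls $\overline B_1\subset\overline B_2\subset\cdots$.

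The inductive scheme runs as follows. Assume $\Phi_k$ has been constructed with $\Phi_k(\overline B_k)\cap K=\emptyset$. Because balls are polynomially convex and automorphisms of $\C^{2n+1}$ preserve polynomial convexity (polynomials being dense in $\Ocal(\C^{2n+1})$), the compact set $L=\Phi_k(\overline B_{k+1})$ is polynomially convex, and its subcompact $L'=\Phi_k(\overline B_k)$ is disjoint from $K$. I would then choose the next automorphism $\psi_{k+1}$ so that (i) $\Vert\psi_{k+1}-\mathrm{id}\Vert_{L'}<\epsilon_k$ and (ii) $\psi_{k+1}(L)\cap K=\emptyset$, the latter being exactly $\Phi_{k+1}(\overline B_{k+1})\cap K=\emptyset$. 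Taking $\epsilon_k\downarrow0$ fast enough---small relative to the modulus of injectivity of $\Phi_k$ on $\overline B_k$, as in the standard Forstneri\v c--Rosay estimates---guarantees that $\Phi_k$ converges locally uniformly to an injective holomorphic $\Phi$; and since $\Phi$ stays within $\epsilon_k$ of $\Phi_k$ on $\overline B_k$ with a margin, (ii) forces $\Phi(\overline B_k)\cap K=\emptyset$ for every $k$, hence $\Phi(\C^{2n+1})\cap K=\emptyset$, as desired.

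Everything therefore reduces to the \emph{push-off step}: given the polynomially convex pair $L'\subset L$ with $L'\cap K=\emptyset$ and given $\epsilon>0$, produce $\psi\in\Aut(\C^{2n+1})$ satisfying (i) and (ii). Since $L$ is compact and the cylinders $K_N$ escape to infinity, $L$ meets only finitely many of them, say $K_{N_1},\dots,K_{N_m}$; each $K_{N_i}$ lies on the hollow wall $\{\max_j(|x_j|,|y_j|)=2^{N_i-1}\}$ and is confined to $|z|\le C_{N_i}$. I would construct an isotopy $\phi_t$ ($t\in[0,1]$) of injective holomorphic maps on a neighborhood of $L$, with $\phi_0=\mathrm{id}$, through polynomially convex images $\phi_t(L)$, that is $\approx\mathrm{id}$ on $L'$ and that lifts the offending pieces $L\cap K_{N_i}$ out of the slab $|z|\le C_{N_i}$ (there is unlimited room in the $z$-direction, and $L'$ already sits off $K$), then invoke the Andersén--Lempert theorem to approximate $\phi_1$ uniformly on $L$ by an automorphism $\psi$; the approximation inherits (i) on $L'$ and, with a small initial margin, (ii) on $L$.

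The hard part is precisely this push-off, and the obstruction is polynomial convexity: the walls are \emph{not} polynomially convex, since the hull of $b\D^{2n}$ fills in the solid polydisk, so I cannot simply enclose each $K_{N_i}$ in a polynomially convex blob and slide $L$ around it. The delicate point is to engineer the isotopy---equivalently, a time-dependent holomorphic field pushing in the $z$-direction that is negligible on $L'$ but large on the offending pieces---so that every intermediate image $\phi_t(L)$ remains polynomially convex (as the Andersén--Lempert approximation requires) while $L'$ is left essentially fixed. Here the dyadic spacing of the radii $2^{N-1}$ helps: consecutive walls are far apart, so the finitely many interfering walls can be treated one at a time with disjoint supports, and the connectedness of $\C^{2n+1}\setminus K$ provides the room to flow each offending piece off to large $|z|$. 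Carrying out this convexity bookkeeping carefully is the crux of the argument.
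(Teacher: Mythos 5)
Your overall skeleton (a push-out scheme producing $\Phi=\lim_k\Phi_k$ with controlled behaviour on an exhaustion, so that the limit is injective and its image misses $K$) is a legitimate route to a Fatou--Bieberbach domain, and your reduction to a single ``push-off step'' is honest. But that step is exactly where the proof lives, and you leave it unconstructed: you would need an isotopy of injective holomorphic maps, fixing $L'$ up to $\epsilon$, moving $L$ off $K$, \emph{through polynomially convex images} --- and, as you yourself observe, the walls $2^{N-1}b\D^{2n}\times C_N\overline\D$ are not polynomially convex, so there is no evident way to certify the intermediate hulls. Saying that ``the dyadic spacing helps'' and that ``connectedness of the complement provides the room'' does not produce the isotopy; in particular $L$ is connected, so its offending pieces cannot be flowed independently with disjoint supports without a global argument, and nothing you write rules out the hull of $\phi_t(L)$ swallowing part of $K$ at some intermediate time. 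As it stands this is a missing idea, not a routine verification.

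The paper avoids the Anders\'en--Lempert machinery and all polynomial convexity bookkeeping by an explicit device worth comparing with: a shear $\phi(z_1,z_2)=(z_1,z_2+f(z_1))$ (cascaded coordinatewise in higher dimension), where $f(\zeta)=\sum_j(\zeta/r_j)^{N_j}$ is a lacunary entire series chosen to be uniformly small on each disk $b_{i-1}\overline\D$ and enormous on each annulus $a_i\le|\zeta|\le b_i$. Such a shear is already a global automorphism --- no approximation step is needed --- and it handles \emph{all infinitely many} cylinders at once: it maps the whole union $K$ of vertical cylinders into a union of horizontal cylinders disjoint from $\C\times 2\overline\D$, while moving points of $\overline\D^2$ by less than $\epsilon_1$. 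A second shear in the other variable returns to a union of vertical cylinders avoiding $2\overline\D^2$, and iterating pushes $K$ to infinity; the Fatou--Bieberbach domain is then the domain of convergence $\Omega=\{z: (\Theta_k(z))_{k}\ \text{is bounded}\}$, on which $\Theta=\lim_k\Theta_k$ is a biholomorphism onto $\C^{2n+1}$, and $\Omega\cap K=\emptyset$ because $|\Theta_k|\to\infty$ on $K$. Note the structural difference from your scheme: the obstacle is pushed to infinity rather than the image of a ball being steered around it, which is precisely what lets one dispense with the convexity of the intermediate sets.
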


Indeed, if a domain $\Omega_0\subset \C^{2n+1}$ is $\alpha_0$-hyperbolic then so is
any subdomain $\Omega\subset \Omega_0$. Furthermore, 
a biholomorphic map $\Phi\colon \C^{2n+1} \to\Omega$ is an isometry in the directed Kobayashi metric
from the contact manifold $(\C^{2n+1},\alpha)$ with $\alpha=\Phi^*\alpha_0$ onto the contact manifold 
$(\Omega,\alpha_0)$. Since $(\Omega,\alpha_0)$ is hyperbolic by Proposition  \ref{prop:hyperbolic}, 
Theorem \ref{th:main2} follows.

Proposition \ref{prop:hyperbolic} is proved in Section \ref{sec:hyperbolic}; the proof uses 
Cauchy estimates and the explicit expression \eqref{eq:alpha0} for the standard contact form $\alpha_0$.
The set $K$ given by \eqref{eq:K} presents obstacles which impose a limitation on the 
size of holomorphic $\alpha_0$-Legendrian disks. 

Proposition \ref{prop:avoiding} is a special case of Theorem \ref{th:FB} 
which provides a more general result concerning the possibility of avoiding certain unions
of cylinders in $\C^n$ by Fatou-Bieberbach domains.
Its proof is inspired by a result of Globevnik \cite[Theorem 1.1]{Globevnik1997ARKMATH}
who constructed Fatou-Bieberbach domains in $\C^n$ whose intersection with a ball
$R\B^n$ for a given $R>0$ is approximately equal to the intersection of the cylinder $\D^{n-1}\times \C$  
with the same ball. His result implies that one can avoid any cylinder $K_N$
in the set $K$ \eqref{eq:K} by a Fatou-Bieberbach domain $\Omega$. We shall improve the construction so that
$\Omega$ avoids all cylinders $K_N$ at the same time. For this purpose we will use a sequence of holomorphic
automorphisms $\theta_k\in\Aut(\C^n)$ such that the sequence of their compositions
$\Theta_k=\theta_k\circ \cdots\circ \theta_1$ converges on a certain domain $\Omega$ and diverges to infinity
on the set $K$; hence $K\cap\Omega=\emptyset$. 
We ensure in addition that each $\theta_k$ approximates the identity map on the polydisk $k\cd^n$,
and hence the limit $\Theta=\lim_{k\to\infty}\Theta_k \colon \Omega\to\C^{2n+1}$ 
is a biholomorphic map of $\Omega$ onto $\C^{2n+1}$.  

Several interesting questions remain open. One is whether there exists a
{\em complete hyperbolic} complex contact structure on $\C^{2n+1}$.  
Another is  whether there exist {\em algebraic} contact forms $\alpha$
on $\C^{2n+1}$ (i.e., with polynomial coefficients) such that $(\C^{2n+1},\alpha)$ is hyperbolic. 
(Our construction only furnishes transcendental examples.) If so, what is the minimal degree of such examples,
and for which degrees is a generic (or very generic) contact form hyperbolic?
In the integrable case, for affine algebraic and projective manifolds, this
is the famous {\em Kobayashi Conjecture}; see Demailly \cite{Demailly2015arxiv}, 
Brotbek \cite{Brotbek2016ARXIV} and Deng \cite{Deng2016ARXIV} for recent results
on this subject. 

Perhaps the most ambitious question is to classify complex contact structures on Euclidean spaces
up to isotopy, in the spirit of Eliashberg's classification \cite{Eliashberg1989IM,Eliashberg1993IMRN}
of smooth contact structures on $\R^3$.

Holomorphic contact structures on compact complex manifolds
$M=M^{2n+1}$ seem much better understood than those on open manifolds; see for example
the paper by LeBrun \cite{LeBrun1995} and the references therein. 
In particular, the space of all holomorphic contact subbundles of $TM$, if nonempty,
is a connected complex manifold \cite[p.\ 422]{LeBrun1995}. 
Furthermore, if $M$ is simply connected then any two holomorphic contact structures
on $M$ are equivalent via some holomorphic automorphism of $M$ \cite[Proposition 2.3]{LeBrun1995}. In particular,
the only complex contact structure on the projective space $\CP^{2n+1}$ 
(up to projective linear automorphisms) is the standard one, given in homogeneous coordinates
by the $1$-form $\theta=\sum_{j=0}^n (z_jdz_{n+j+1}- z_{n+j+1}dz_j)$.
This structure is obtained by contracting the holomorphic symplectic form 
$\omega=\sum_{j=0}^{n} dz_j\wedge dz_{n+j+1}$ on $\C^{2n+2}$
with the radial vector field $\sum_{k=0}^{2n+1} z_k\frac{\di}{\di z_k}$. 
Its restriction to any affine chart $\C^{2n+1} \subset \CP^{2n+1}$ is 
equivalent to the standard contact structure given by \eqref{eq:alpha0}. It follows 
that the projective space $\CP^{2n+1}$ does not carry any hyperbolic complex contact structures.


\section{Hyperbolic contact structures on domains in $\C^{2n+1}$} 
\label{sec:hyperbolic}

In this section we prove Proposition \ref{prop:hyperbolic}. For simplicity of notation we consider 
the case $n=1$; the same proof applies in every  dimension. 

Thus, let $(x,y,z)$ be complex coordinates on $\C^3$ and $\alpha_0=dz+xdy$ be the standard contact 
form \eqref{eq:alpha0} on $\C^3$. Recall that $\D=\{\zeta\in \C: |\zeta|<1\}$ and $\overline \D=\{\zeta\in \C: |\zeta|\le 1\}$.
The definition of the directed Kobayashi metric shows that Proposition \ref{prop:hyperbolic} is an immediate 
corollary to the following lemma.

%
%
\begin{lemma}\label{lem:estimate}
Assume that $C_N\ge 2^{3N+1}$ for every $N\in\N$ and let
\[
	K=\bigcup_{N=1}^\infty 2^{N-1} b\D^{2}_{(x,y)} \times C_N\overline \D_z.
\]
For every holomorphic $\alpha_0$-horizontal disk $f(\zeta)=(x(\zeta), y(\zeta),z(\zeta)) \in \C^3\setminus K$
$(\zeta\in\D)$ with $f(0)\in 2^{N_0} \D^3$ for some $N_0\in \N$ we have the estimates
\begin{equation}\label{eq:est}
	|x'(0)| < 2^{N_0+1},\quad |y'(0)| < 2^{N_0+1}, \quad  |z'(0)| < 2^{2N_0+1}.
\end{equation}
\end{lemma}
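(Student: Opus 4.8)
The plan is to exploit the horizontality condition $f^*\alpha_0=0$, which for $\alpha_0=dz+x\,dy$ reads $z'(\zeta)=-x(\zeta)y'(\zeta)$ for all $\zeta\in\D$, together with the geometric obstruction imposed by $K$. The key observation is that the disk $f$ must avoid each cylinder $K_N=2^{N-1}b\D^2_{(x,y)}\times C_N\overline\D_z$. I would first translate this avoidance into a concrete constraint on the component functions $x(\zeta)$ and $y(\zeta)$: if the disk ever has $|z(\zeta)|\le C_N$ while $(x(\zeta),y(\zeta))$ lies on the boundary torus $2^{N-1}b\D^2$, it would hit $K_N$. Since $f(0)\in 2^{N_0}\D^3$, the starting point is well inside the region bounded by these cylinders, and the upper bound on $|z'(0)|$ in terms of $|x'(0)|$ and $|y'(0)|$ (coming from the horizontality relation evaluated at $0$, giving $z'(0)=-x(0)y'(0)$, with $|x(0)|<2^{N_0}$) means it suffices to bound $|x'(0)|$ and $|y'(0)|$.

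The main strategy for bounding the derivatives is a Cauchy-estimate argument. I would argue that each component $x(\zeta)$ and $y(\zeta)$ must remain in a bounded region on the unit disk $\D$, because crossing the walls of the cylinders is forbidden. More precisely, I expect the argument to show that since $f(0)$ lies in $2^{N_0}\D^3$ and the disk cannot puncture through the boundary torus $2^{N_0}b\D^2_{(x,y)}$ at any $\zeta$ where $|z(\zeta)|\le C_{N_0}$, the projection $(x(\zeta),y(\zeta))$ stays confined: either it remains inside the polydisk $2^{N_0}\D^2$, or the horizontality forces $|z|$ to grow, which itself is constrained. The Cauchy estimate $|x'(0)|\le \sup_{|\zeta|=r}|x(\zeta)|/r$ (and similarly for $y'$), optimized over $r<1$, then yields $|x'(0)|<2^{N_0+1}$ and $|y'(0)|<2^{N_0+1}$ once one establishes the uniform bound $|x(\zeta)|,|y(\zeta)|\lesssim 2^{N_0}$ on $\D$. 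The hypothesis $C_N\ge 2^{3N+1}$ is the quantitative input that prevents the $z$-component from conspiring with the horizontality relation to let $(x,y)$ escape: it guarantees the cylinders are tall enough that the disk cannot sneak between consecutive cylinders in the $z$-direction while its $(x,y)$-part migrates outward.

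The delicate point — and what I expect to be the main obstacle — is controlling the interplay between the three components via the nonlinear constraint $z'=-xy'$. Bounding $x$ and $y$ separately is not immediate because their excursions are coupled to the growth of $z$ through integration of $-x\,dy$, and one must rule out the scenario where $(x,y)$ wanders out past $2^{N_0}b\D^2$ while $z$ stays small enough to collide with a cylinder at a higher level $N>N_0$. The heart of the proof should be a dichotomy or a bootstrapping estimate: if $(x(\zeta),y(\zeta))$ were to reach the torus at radius $2^{N-1}$ for some $N>N_0$, then by integrating the horizontality relation one obtains a lower bound on $|z(\zeta)-z(0)|$ that is too small compared to $C_N$ to clear the cylinder $K_N$, producing a contradiction. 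I would carefully arrange the doubling geometry of the radii $2^{N-1}$ and the growth rate $C_N\ge 2^{3N+1}$ so that this length-versus-height comparison closes, after which the Cauchy estimates give \eqref{eq:est} directly.
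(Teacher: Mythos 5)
Your overall strategy is the same as the paper's: use $z'=-xy'$ to reduce everything to confining $(x,y)$, then apply Cauchy estimates. But the step you yourself flag as delicate --- establishing that $(x(\zeta),y(\zeta))$ stays in (a fixed multiple of) $2^{N_0}\D^2$ --- is left with a genuine circularity. To ``integrate the horizontality relation'' and bound $|z(\zeta)-z(0)|\le\int|x|\,|y'|$ you need a bound on $|y'|$, and the only tool available is a Cauchy estimate, which requires an a priori sup bound on $|y|$ on a \emph{larger} disk than the one on which you are trying to prove confinement. A ``first exit'' argument run outward from level $N_0$ does not close, both for this reason and because the set of $\zeta$ where $(x(\zeta),y(\zeta))\in 2^{N_0}\D^2$ need not be a disk, so you cannot apply Cauchy estimates on it.

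The paper resolves this by running the induction \emph{downward} rather than outward. After shrinking the disk one may assume $f$ is holomorphic on $\overline\D$, so there is some large $N>N_0$ with $|x|,|y|<2^N$ on $\overline\D$ for free. Cauchy estimates with loss $\delta=2^{-N}$ give $|xy'|<2^{3N}$ on $|\zeta|\le 1-2^{-N}$, hence $|z|<2^{N_0}+2^{3N}<2^{3N+1}\le C_N$ there; since $f$ avoids $K_N$, the point $(x(\zeta),y(\zeta))$ never lies on the torus $2^{N-1}b\D^2$ for such $\zeta$, and since that torus disconnects $2^N\D^2$ and the disk starts inside, connectedness forces $(x(\zeta),y(\zeta))\in 2^{N-1}\D^2$ on $|\zeta|\le 1-2^{-N}$. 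One then repeats with $N-1,N-2,\dots,N_0+1$, shrinking the radius by $2^{-j}$ at step $j$; the budget $\sum_{j>N_0}2^{-j}<1/2$ leaves a disk of radius $\ge 1/2$ on which $(x,y)\in 2^{N_0}\D^2$, and the final Cauchy estimate at radius $1/2$ produces exactly the factor $2$ in $2^{N_0+1}$ (your hoped-for bound on all of $\D$ is not what the argument yields, nor is it needed). Supplying this descending induction, the connectedness step, and the radius bookkeeping is what is missing from your proposal.
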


\begin{proof}
Replacing $f$ by the disk $\zeta\mapsto f(r\zeta)$ for some $r<1$ close to $1$ we may assume that
$f$ is holomorphic on $\overline \D$. Pick a number $N\in\N$ with $N>N_0$ such that $|x(\zeta)| < 2^N$ and $|y(\zeta)|<2^N$ 
for all $\zeta\in\overline \D$. By the Cauchy estimates applied with $\delta=2^{-N}$ we then have
\[
	|y'(\zeta)|<2^{2N}\quad \text{and}\quad  |x(\zeta) y'(\zeta)|< 2^{3N}  \quad \text{for}\ \ |\zeta|\le 1-2^{-N}.
\]
Since $f$ is a horizontal disk, we have $z'(\zeta)=-x(\zeta)y'(\zeta)$ for $\zeta\in\D$ and hence 
\[
	|z(\zeta)| \le |z(0)| +\left| \int_0^\zeta xdy\right| < 2^{N_0}+2^{3N} < 2^{3N+1}\le C_N \quad \text{for}\ \ |\zeta|\le 1-2^{-N}.
\]
From this estimate,  the definition of the set $K$ and the fact that $f(\D)\cap K=\emptyset$ it follows that 
\[
	(x(\zeta),y(\zeta))\notin 2^{N-1}b\D^{2} \quad \text{for}\ \ |\zeta|\le 1-2^{-N}.
\]
Since $2^{N-1}b\D^{2}$ disconnects the bisk $2^N\d^2$ and we have 
$(x(0),y(0))\in 2^{N_0}\D^2\subset 2^{N-1}\D^{2}$, we conclude that
\[
	(x(\zeta),y(\zeta))\in 2^{N-1}\D^{2} \quad \text{for}\ \ |\zeta|\le 1-2^{-N}.
\]
If $N-1>N_0$, we can repeat the same argument with the restricted horizontal disk
$f\colon (1-2^{-N})\cd \to   \C^3$ to obtain
\[
		(x(\zeta),y(\zeta))\in 2^{N-2}\D^{2} \quad \text{for}\ \ |\zeta|\le 1-2^{-N}-2^{-(N-1)}.
\]
After finitely steps of the same kind we get that 
\[
	(x(\zeta),y(\zeta))\in 2^{N_0}\D^{2} \quad \text{for}\ \ |\zeta|\le 1-2^{-N}-\ldots - 2^{-(N_0+1)}.
\]
Since $2^{-N}+\ldots + 2^{-(N_0+1)}<1/2$, we see that 
$(x(\zeta),y(\zeta))\in 2^{N_0}\D^{2}$ for $|\zeta|\le  1/2$. Applying once again the Cauchy estimates
gives $|x'(0)|, |y'(0)|\le 2^{N_0+1}$ and hence $|z'(0)|=|x(0)y'(0)| \le 2^{2N_0+1}$; these are  
precisely the conditions in \eqref{eq:est}.
\end{proof}


\section{Fatou-Bieberbach domains avoiding a union of cylinders}  \label{sec:FB}

In this section we prove the following result on avoiding certain closed cylindrical sets in $\C^n$ by Fatou-Bieberbach domains.
This includes Proposition  \ref{prop:avoiding} as a special case.

\begin{theorem} \label{th:FB}
Let $0<a_1<b_1<a_2<b_2<\cdots$ and $c_i>0$ be sequences of real numbers such that 
$\lim_{i\to \infty}a_i = \lim_{i\to \infty} b_i =+\infty$. Let $n>1$ be an integer and 
\begin{equation}\label{eq:K1}
	K= \bigcup_{i=1}^\infty \left( b_i \cd^{n-1}\setminus a_i \D^{n-1}\right) \times c_i\overline \D \subset \C^n.
\end{equation}
Then there exists a Fatou-Bieberbach domain $\Omega\subset \C^n\setminus K$.
\end{theorem}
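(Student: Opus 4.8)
The plan is to build a single Fatou--Bieberbach domain $\Omega$ avoiding all the cylinders $K_i$ simultaneously, by composing a sequence of holomorphic automorphisms $\theta_k\in\Aut(\C^n)$ whose partial compositions $\Theta_k=\theta_k\circ\cdots\circ\theta_1$ converge on $\Omega$ while diverging to infinity on $K$. The author's own outline (in the introduction) already signals this: each $\theta_k$ should approximate the identity on the growing polydisk $k\,\cd^n$, so that the limit $\Theta=\lim_k\Theta_k$ is biholomorphic from $\Omega$ onto all of $\C^n$, while at the same time $\theta_k$ pushes the relevant portion of $K$ out toward infinity. I would organize the construction around an exhaustion argument, treating one cylinder (or a few) at each stage and keeping careful control of the previously-arranged estimates.

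\emph{Key steps.} First I would recall the basic tool from Globevnik's construction: for a single cylinder $\bigl(b\,\cd^{n-1}\setminus a\,\D^{n-1}\bigr)\times c\,\overline\D$, one can find a shear-type automorphism of $\C^n$ (or a short composition of shears) that is close to the identity on a prescribed large polydisk yet moves the annular cross-section $b\,\cd^{n-1}\setminus a\,\D^{n-1}$ to large modulus in the last coordinate, thereby sweeping that cylinder out of a fixed ball. Since $n>1$, the cross-section $b\,\cd^{n-1}\setminus a\,\D^{n-1}$ is connected and the annular region leaves room in the first $n-1$ coordinates to construct such a shear. Second, I would set up the inductive scheme: at step $k$, having fixed $\Theta_{k-1}$, choose $\theta_k$ so that (i) $\theta_k$ is $\epsilon_k$-close to the identity on $k\,\cd^n$ with $\sum_k\epsilon_k<\infty$, ensuring uniform convergence of $\Theta_k$ on compacts of the eventual domain and nonsingularity of the limit; and (ii) $\Theta_k$ already sends the relevant portion of $K$ (the cylinders whose radii $b_i$ exceed some threshold, or more precisely the part of $K$ not yet expelled) beyond modulus $k$ in the appropriate direction. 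The divergence on $K$ combined with the telescoping identity-approximation on compacts is what produces a proper subdomain $\Omega\subsetneq\C^n$ with $K\cap\Omega=\emptyset$ and $\Theta|_\Omega\colon\Omega\to\C^n$ biholomorphic.

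\emph{The domain $\Omega$} is then defined as the set of points $p$ for which $\Theta_k(p)$ converges; standard Fatou--Bieberbach theory guarantees this is open, connected, and biholomorphic to $\C^n$ via $\Theta$, and the estimates force $K\subset\C^n\setminus\Omega$. The main obstacle I expect is the \emph{simultaneity}: Globevnik's technique disposes of one cylinder inside one ball, but here I must expel infinitely many cylinders $K_i$ that themselves tend to infinity, without the automorphism used to clear a distant cylinder $K_i$ destroying the near-identity behavior already arranged on the smaller polydisks (which is what keeps $\Theta$ biholomorphic). The delicate point is to arrange the radii $a_i<b_i\to\infty$ together with the polydisk exhaustion $k\,\cd^n$ so that at each stage the cylinder being cleared lies \emph{outside} the polydisk $k\,\cd^n$ on which $\theta_k$ must be close to the identity; the hypothesis $a_i,b_i\to\infty$ is precisely what gives this room. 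Verifying that a single shear can be simultaneously near-identity on a large polydisk and strongly expanding on the exterior annular cylinder, with quantitative estimates that telescope, is the technical heart of the argument, and is where the bulk of the careful Cauchy-estimate bookkeeping will reside.
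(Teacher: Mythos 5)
Your overall strategy (push-out method: automorphisms $\theta_k$ close to the identity on $k\,\cd^n$ whose compositions $\Theta_k$ diverge on $K$, with $\Omega$ the set of bounded orbits) is exactly the paper's, but your organization --- ``treating one cylinder (or a few) at each stage'' --- has a genuine gap. At step $k$ the near-identity requirement only controls $\theta_k$ on the polydisk $k\,\cd^n$; outside that polydisk $\theta_k$ is unconstrained, so an automorphism designed to expel the cylinder $K_k$ may drag portions of the infinitely many not-yet-treated cylinders $K_i$, $i>k$ (and of the images of the already-expelled ones), back toward the origin, or deform them into sets with no usable structure. Your outline names the danger of disturbing the near-identity behavior on small polydisks, but not this converse problem, and the hypothesis $a_i,b_i\to\infty$ alone does not resolve it: you would need quantitative control of $\theta_k$ on an unbounded set, which a generic ``clear one cylinder'' shear does not provide.

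The paper closes this gap with two ideas absent from your proposal. First, each shear handles \emph{all} cylinders simultaneously: the function in $\phi_1(z_1,z_2)=(z_1,z_2+f(z_1))$ is the single entire series $f(\zeta)=\sum_{j}(\zeta/r_j)^{N_j}$ with $b_{j-1}<r_j<a_j$, whose $j$-th term is made (by taking $N_j$ large) tiny on the disk $b_{j-1}\cd$ but enormous on the annulus $A_j=b_j\cd\setminus a_j\D$; this gives simultaneous lower bounds $\inf_{A_i}|f|-c_i>\alpha_i$ and upper bounds $\sup_{b_i\cd}|f|+c_i<\beta_i$ for every $i$. Second, and crucially, the estimates are arranged so that the image $\theta_k(K_k)$ is again a set of the \emph{same cylindrical form} $K_{k+1}$ (with the roles of the variables alternating between the two shears), now disjoint from $(k+1)\cd^n$; this structural invariance is what allows the induction to repeat verbatim and yields $\Theta_k(K)\subset K_{k+1}\subset\C^n\setminus(k+1)\cd^n$. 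Without an analogue of these two points your scheme does not close, so the ``technical heart'' you defer is not mere Cauchy-estimate bookkeeping but the actual content of the proof.
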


As said in the Introduction, the proof is inspired by \cite[proof of Theorem 1.2]{Globevnik1997ARKMATH} to a certain point
and is based on the so called push-out method.
Since the set $K$ \eqref{eq:K1} is noncompact, the construction of automorphisms used in the proof
is somewhat more involved in our case.  On the other hand, since our goal is merely to avoid 
$K$ by a Fatou-Bieberbach domain, and not to approximate a given cylinder as Globevnik
did in \cite{Globevnik1997ARKMATH}, the construction is less precise in certain other aspects.

\begin{proof}
We denote by $\Aut(\C^n)$ the group of all holomorphic automorphisms of $\C^n$.
We first give the proof for $n=2$ and explain in the end how to treat the general case.

Let $(z_1,z_2)$ be complex coordinates on $\C^2$, and let $K=K_1$ be the set \eqref{eq:K1}. 
Up to a dilation of coordinates, we may assume without loss of generality that $a_1>1$. 

Pick sequence $\epsilon_k\in (0,1)$ satisfying $\sum_{k=1}^\infty \epsilon_k < +\infty$,
We shall construct sequences of automorphisms $\phi_k,\psi_k\in\Aut(\C^2)$ $(k\in \N)$ of the following form:
\begin{equation}\label{eq:phipsi}
	\phi_k(z_1,z_2)=(z_1,z_2+f_k(z_1)),\quad \psi_k(z_1,z_2)=(z_1+g_k(z_2),z_2),
\end{equation}
where $f_k$ and $g_k$ are suitably chosen entire functions on $\C$ to be specified. Set 
\begin{equation}\label{eq:theta}
	\theta_k=\psi_k\circ\phi_k,\quad \Theta_k=\theta_k\circ\cdots\circ \theta_1,\quad k\in\N.
\end{equation}
We will also ensure that for every $k\in\N$ we have
\[ 
	|\theta_k(z)-z| < \epsilon_k \quad \text{for} \ \ z\in k\cd^2.
\]
Granted the last condition, it follows (cf.\ \cite[Proposition 4.4.1 and Corollary 4.4.2]{Forstneric2011book})  that the sequence
$\Theta_k\in \Aut(\C^2)$ converges uniformly on compacts in the open set 
\[
	\Omega= \bigcup_{k=1}^\infty \Theta_k^{-1}(k\d^2) = \{z\in\C^2: (\Theta_k(z))_{k\in \N}\ \ \text{is a bounded sequence}\}
\] 
to a biholomorphic map $\Theta=\lim_{k\to\infty}\Theta_k \colon \Omega\to\C^2$ of $\Omega$ onto $\C^2$. 
We will also ensure that 
\begin{equation}\label{eq:Thetak}
	|\Theta_k(z)| \to + \infty \quad \text{for all points}\ \ z\in K,
\end{equation}
and hence $K\cap \Omega=\emptyset$. This will prove the theorem when $n=2$.

We begin by explaining how to choose the first two maps $\phi_1$ and $\psi_1$; all subsequent
steps will be analogous. Set $b_0=1$. Pick a sequence $r_j$ satisfying $b_{j-1}<r_j<a_{j}$ for all $j=1,2,\ldots$. 
Let $N_j\in\N$ be a sequence of integers to be specified later. Set 
\[
	f(\zeta) = \sum_{j=1}^\infty \left(\frac{\zeta}{r_j}\right)^{N_j}.
\]
This function will define the first automorphism $\phi_1$ (cf.\ \eqref{eq:phipsi}).
Let $f_i(\zeta) = \sum_{j=1}^i \left(\frac{\zeta}{r_j}\right)^{N_j}$ denote the $i$-th partial sum
of the series defining $f(\zeta)$, where we set $f_0=0$. 
By choosing the exponent $N_i$ big enough, we can ensure that the summand
$(\zeta/r_i)^{N_i}$ is arbitrarily small on the disk $b_{i-1}\cd$ and is arbitrarily big on the annulus 
\begin{equation}\label{eq:annulus}
	A_i:=b_i \cd\setminus a_i \D=\{\zeta: a_i \le |\zeta| \le b_i\}. 
\end{equation}
In particular, we may ensure that for every $i\in\N$ we have
\begin{equation}\label{eq:est1}
	\sup_{|\zeta| \le b_{i-1}} \left| \frac{\zeta}{r_{i}}\right|^{Ni} <2^{-i-1} \epsilon_1.
\end{equation}
It follows that the power series defining $f(\zeta)$ converges on all of $\C$ and satisfies
\begin{equation}\label{eq:est2}
	\sup_{|\zeta| \le b_{i-1}}|f(\zeta) - f_{i-1}(\zeta)| <2^{-i} \epsilon_1,\quad i\in \N.
\end{equation}
Note that the inequalities \eqref{eq:est1} and \eqref{eq:est2} persist if we increase the exponents $N_i$.
We can inductively choose the sequence $N_i\in \N$ to grow fast enough such that the following
inequalities hold for every $i\in\N$ with an increasing sequence of numbers $M_i \ge i+1$: 
\begin{equation}\label{eq:est3}
	\sup_{|\zeta| \le b_{i-1}}|f_{i-1}(\zeta)| + c_{i-1} + \epsilon_1 < M_i <
	\inf_{\zeta\in A_i} \left( \left| \frac{\zeta}{r_i} \right|^{N_i} - |f_{i-1}(\zeta)| \right) - c_i - \epsilon_1.
\end{equation}
(Recall that $A_i$ is the annulus \eqref{eq:annulus}.
Here, $c_0\ge 0$ is arbitrary while $c_i>0$ for $i\in\N$ are the constants in the definition \eqref{eq:K1} of the set $K$.)
In view of the inequalities \eqref{eq:est1}, \eqref{eq:est2} and \eqref{eq:est3} 
there exist numbers $\beta_{i-1} < \alpha_i$ such that for all $i\in\N$ we have
\begin{equation}\label{eq:est4}
	\sup_{|\zeta| \le b_{i-1}}|f(\zeta)| + c_{i-1}  <  \beta_{i-1} <  M_i < \alpha_i < \inf_{\zeta\in A_i} |f(\zeta)|  - c_i.
\end{equation}
This gives increasing sequences $0<\beta_0< \alpha_1<\beta_1<\alpha_2<\beta_2<\cdots$ diverging to $\infty$. Set 
\[
	\phi_1(z_1,z_2)=(z_1,z_2+f(z_1)).
\]
The right hand side of  \eqref{eq:est4} shows that for every point $z=(z_1,z_2)\in A_i\times c_i\cd$ we have
\[
	|z_2+f(z_1)| \ge |f(z_1)|- c_i >\alpha_i,
\]
while the left hand side of \eqref{eq:est4} gives
\[
	|z_2+f(z_1)| \le c_i + |f(z_1)| < \beta_i. 
\]
Since these inequalities hold for every $i\in\N$, it follows that 
\[
	\phi_1(K) \subset L:=  \bigcup_{i=1}^\infty b_i\overline \D \times 
	\left( \beta_i \cd\setminus \alpha_i \d\right) \subset \C^2.
\]
Note that the set $L$ is of the same kind as $K$ \eqref{eq:K1} with the reversed roles 
of the variables, i.e., the cylinders in $L$ are horizontal instead of vertical.
Furthermore, since the sequence $\alpha_i$ is increasing and $\alpha_1 > M_1\ge 2$ 
by \eqref{eq:est4}, we also see that 
\[
	L\cap (\C\times 2\cd) =\emptyset.
\]
The same argument as above with the set $L$ furnishes a shear automorphism
\[
	\psi_1(z_1,z_2)=(z_1+g(z_2),z_2)
\]
for some $g\in\Oscr(\C)$ (cf.\ \eqref{eq:phipsi}) and a set $K_2$ of the same kind as $K=K_1$ \eqref{eq:K1} 
(this time again with vertical cylinders) such that, setting $\theta_1:=\psi_1\circ\phi_1\in\Aut(\C^2)$, we have 
\begin{equation}\label{eq:step1}
	\theta_1(K_1) \subset K_2, \quad K_2 \cap 2\cd^2 =\emptyset,
	\quad \sup_{z\in \cd^2} |\theta_1(z)-z| <\epsilon_1.
\end{equation}

Continuing inductively,  we find a sequence of automorphisms $\theta_k\in\Aut(\C^2)$ and of closed
sets $K_k\subset \C^2$ of the form \eqref{eq:K1} such that for every $k\in\N$ we have 
\begin{equation}\label{eq:condition-k}
	\theta_k(K_k) \subset K_{k+1},\quad K_k \cap k\cd^2 =\emptyset, \quad
	\sup_{z\in k\cd^2} |\theta_k(z)-z| <\epsilon_k.
\end{equation}
Each step of the recursion is of exactly the same kind as the initial one. This implies that 
\[
	\Theta_k(K)\subset K_{k+1}\subset \C^2\setminus (k+1)\cd^2,\quad k\in\N
\] 
and hence  \eqref{eq:Thetak} also holds. This completes the proof when $n=2$. 

Suppose now that $n>2$.  In this case, each automorphism $\theta_k=\psi_k\circ\phi_k\in\Aut(\C^n)$
in the sequence \eqref{eq:theta} is a composition of two shear-like maps of the form
\begin{eqnarray*}
	\phi_k(z_1,z_2,\ldots,z_n) &=& \bigl(z_1,z_2+f_k(z_1),z_3+f_k(z_2),\ldots, z_n+f_k(z_{n-1})\bigr), \\
	\psi_k(z_1,z_2,\ldots,z_n) &=& \bigl(z_1+g_k(z_2),z_2+g_k(z_3),\ldots, z_{n-1}+g_k(z_n), z_n)\bigr).
\end{eqnarray*}
A suitable choice of entire functions $f_k,g_k \in\Oscr(\C)$ ensures as before that condition
\eqref{eq:condition-k} holds for each $k$ (with $\cd^2$ replaced by $\cd^n$). We leave the details to an interested reader.
Further details in the case $n>2$ are also available in \cite[proof of Theorem 1.2]{Globevnik1997ARKMATH}.
\end{proof}


\subsection*{Acknowledgements}
Research on this work is partially  supported  by the research program P1-0291 and the grant J1-7256 from 
ARRS, Republic of Slovenia.  

I wish to thank Yakov Eliashberg for having provided some of the references and for 
stimulating discussions, and  Josip Globevnik for having brought to my attention the paper
\cite[Theorem 1.2]{Globevnik1997ARKMATH} whose main idea is employed in the proof of Theorem \ref{th:FB}.


{\bibliographystyle{abbrv} \bibliography{bibADFL}}


\vspace*{0.5cm}
\noindent Franc Forstneri\v c

\noindent Faculty of Mathematics and Physics, University of Ljubljana, Jadranska 19, SI--1000 Ljubljana, Slovenia

\noindent Institute of Mathematics, Physics and Mechanics, Jadranska 19, SI--1000 Ljubljana, Slovenia

\noindent e-mail: {\tt franc.forstneric@fmf.uni-lj.si}

\end{document}